\newtheorem{theorem}{Theorem}[section]
\newtheorem{prop}[theorem]{Proposition}
\newtheorem{cor}[theorem]{Corollary}
\theoremstyle{definition}
\newtheorem{definition}[theorem]{Definition}
\newtheorem{example}[theorem]{Example}
\theoremstyle{remark}
\numberwithin{equation}{section}
\DeclareMathOperator{\Min}{Min}
\DeclareMathOperator{\Trace}{tr}
\DeclareMathOperator{\diag}{diag}
\DeclareMathOperator{\End}{End}
\DeclareMathOperator{\GL}{GL}
\DeclareMathOperator{\Aut}{Aut}
\DeclareMathOperator{\SL}{SL}
\newcommand{\Z}{{\mathbb{Z}}}
\newcommand{\Q}{{\mathbb{Q}}}
\newcommand{\N}{{\mathbb{N}}}
\newcommand{\R}{{\mathbb{R}}}
\newcommand{\HH}{{\mathbb{H}}}
\newcommand{\C}{{\mathbb{C}}}
\begin{document}

\title{Golden lattices.}

\author{Gabriele Nebe}
\address{
Lehrstuhl D f\"ur Mathematik, RWTH Aachen University,
52056 Aachen, Germany}
\email{nebe@math.rwth-aachen.de}

\dedicatory{This paper is dedicated to Boris Venkov.}

\keywords{extremal even unimodular lattice, golden ratio, Hilbert modular forms}

\maketitle

\begin{abstract}
Let $\vartheta := \frac{-1+\sqrt{5}}{2}$ be the golden ratio.
A golden lattice is an even unimodular $\Z[\vartheta ]$-lattice 
of which the Hilbert theta series is an extremal Hilbert modular form.
We construct golden lattices from extremal even unimodular lattices
and obtain families of dense modular lattices. 
\end{abstract}

\section{Introduction} 

\subsection{Even unimodular $R$-lattices}
Let $R$ be the ring of integers in a real number field $K$. 
Let $\Lambda $ be a full $R$-lattice in Euclidean $n$-space
$(K^n,Q)$, so $\Lambda $ is a finitely generated $R$-submodule of 
$K^n$ that spans $K^n$ over $K$ and $Q:K^n \to K$ is a 
totally positive definite quadratic form.
The {\em polar form} of $Q$ is 
the positive definite symmetric $K$-bilinear form $B$ defined by
$B(x,y):=Q(x+y)-Q(x)-Q(y)$. 
The lattice $(\Lambda , Q)$ is called an {\em even unimodular} $R$-lattice,
if $Q:\Lambda \to R$ is an integral quadratic form such that 
$$\Lambda = \Lambda ^{\#} := \{ x\in K^n \mid B(x,\lambda )\in R \mbox{ for all } \lambda \in \Lambda \} .$$
For small dimension and small fields such lattices have been
classified in \cite{Hsiaw5}, \cite{Hsiaw2}, \cite{Hungw3}. 

\subsection{Trace lattices} 
Any $R$-lattice $(\Lambda, Q)$ and any totally positive $\alpha \in K_+$
gives rise to a positive definite 
$\Z $-lattice 
$$L_{\alpha }:= (\Lambda , \Trace_{K/\Q }(\alpha Q)) $$
 of dimension $n[K:\Q ]$.
$L_{\alpha }$ will be called a {\em trace lattice} of $(\Lambda , Q)$, since
the quadratic form $$q:=q_{\alpha } :L_{\alpha } \to \Q, x\mapsto \Trace_{K/\Q }(\alpha Q(x))$$
is obtained as a trace. 
In this way  an $R$-lattice defines a $[K:\Q ]$-parametric family
of positive definite $\Z $-lattices $\{ L_{\alpha } \mid \alpha \in K_+ \}$
of which the most important 
invariants like minimum and determinant, and also the theta series 
may be read off from the corresponding invariants of 
 the lattice $(\Lambda , Q)$ and its Hilbert theta
series. 
For instance the dual lattice
\begin{equation} \label{Zdual} 
L_{\alpha }^* := \{  x\in K^n \mid \Trace _{K/\Q} (\alpha B(x,\lambda )) \in \Z  \mbox{ for all } \lambda \in \Lambda \} = \alpha ^{-1} R^{*} \Lambda ^{\#} 
\end{equation}
where 
$R^{*} := \{ s\in K \mid \Trace_{K/ \Q} (sR ) \subseteq \Z \} $
is the {\em inverse different} of $R$.

\subsection{Extremal even unimodular lattices}
In particular if $(\Lambda , Q)$ is an even unimodular $R$-lattice 
and $\alpha $ is a totally positive generator of  $R^{*} = \alpha R$
then the trace lattice $L_{\alpha }$ is an even unimodular 
integral lattice of dimension $N=n[K:\Q ]$.  
It is well known that the minimum 
$$\min (L,q) = \min \{  q(x) \mid 0\neq x \in L \} $$
of an even unimodular $\Z $-lattice $L$ of dimension $N$ is bounded 
by $\min (L,q) \leq 1 +  \lfloor \frac{N}{24} \rfloor $. 
Lattices that achieve equality are called {\em extremal}. 
If the dimension $N$ is a multiple of 
24 then the extremal even unimodular lattices are densest known lattices.
There are 4 such lattices known, the Leech lattice $\Lambda _{24}$
in dimension 24,
three lattices $P_{48p}$, $P_{48q}$, and $P_{48n}$ of dimension 48 and 
one lattice $\Gamma _{72}$ of dimension 72.

\subsection{Golden lattices}
This article considers the situation where $R=\Z[\vartheta ] $
and $\vartheta = \frac{-1+\sqrt{5}}{2} $ is the golden ratio. 
Then $K$ is a real quadratic number field of minimal possible discriminant 5,
$R$ is a principal ideal domain and  
$$R^{*}  = \eta ^{-1} R \mbox{ where } \eta = 3+\vartheta = \frac{5+\sqrt{5}}{2} $$ is a totally 
positive generator of the  prime ideal of norm 5.
The extremal even unimodular 
lattices $\Lambda _{24}$, $P_{48n} $ and $\Gamma _{72}$ may be obtained
as trace lattices $L_{\eta^{-1} }$ 
 of even unimodular $\Z[\vartheta ]$-lattices. This structure allows to construct 
interesting families of dense modular lattices (Theorem \ref{main}).

\section{Hilbert theta series of golden lattices} \label{s2} 

\subsection{Symmetric Hilbert modular forms}

Let $R:=\Z[\vartheta ]$ be the ring of integers in the real quadratic number
field $K:=\Q [\sqrt{5}]$. 
Then the Hilbert theta series (Definition \ref{HTS})  of an $n$-dimensional
even unimodular $R$-lattice $(\Lambda , Q)$ is a Hilbert modular form
 of weight $n/2$ (see \cite[Section 5.7]{Ebeling}). If $(\Lambda , Q)$ is Galois invariant, then so is its
theta series and hence this
Hilbert modular form is symmetric. 
Hilbert modular forms for $R$ are holomorphic functions on the direct product 
$\HH _K:= \HH \times \HH$ of 2 copies of the upper half plane $$\HH := \{ z\in \C \mid \Im (z) > 0 \} .$$
If $\sigma _1, \sigma _2 $ denote the two embeddings of $K$ into $\R \subset \C $ then 
$\SL_2(R)$ acts on  $\HH _K$ by 
$$(z_1,z_2) \left( \begin{array}{cc} a & b \\ c & d \end{array} \right) := 
\left( \frac{\sigma _1 (a)z_1+\sigma_1(b)}{\sigma _1(c) z_1 + \sigma _1(d)} , 
\frac{\sigma _2 (a)z_2+\sigma_2(b)}{\sigma _2(c) z_2 + \sigma _2(d)} \right) $$ 
and the Galois automorphism just interchanges the two copies of $\HH $.
The ring of symmetric Hilbert modular forms for the group $\SL_2(R)$ is a polynomial ring
$${\mathcal H} := \C [A_2,B_6,C_{10} ] $$ 
where the explicit generators of weight 2, 6, and 10 have been obtained
in \cite{Gundlach} and can be found in \cite{Ebeling}. 
We denote by 
$${\mathcal H}_w:=\{ f\in {\mathcal H} \mid \mbox{ weight of } f = w \} $$
the space of symmetric Hilbert modular forms of weight $w$. 

\begin{definition}\label{HTS}
Let $(\Lambda ,Q )$ be an even 
$R$-lattice. 
Then 
the {\em Hilbert theta series} of $\Lambda $ is
$$\Theta (\Lambda , Q) := \sum _{\lambda \in \Lambda } \exp (2\pi i \Trace _{K/\Q} (z Q(\lambda ) )) = 
1+\sum _{X\in R_{+}} A_X \exp (2\pi i \Trace _{K/\Q} (z X )) $$
where $A_X := |\{ \lambda \in \Lambda \mid Q(\lambda ) = X \} |$ 
 and 
$\Trace _{K/\Q }( z X ) = z_1 \sigma _1(X) + z_2 \sigma _2(X) $ for $z=(z_1,z_2)\in \HH _K$.
\end{definition}

Since $(1,\eta^{-1} )$ is a $\Q $-basis of $K$ the trace of 
$Q(\lambda )$ and $\eta^{-1} Q(\lambda )$ uniquely
 determine the value $Q(\lambda )\in K$.
So $\Theta (\Lambda , Q)$ is determined by the {\em $(q_0,q_1)$-expansion}
$$\Theta (\Lambda , Q) := \sum _{\lambda \in \Lambda } 
q_0^{\Trace_{K/\Q} (\eta^{-1} Q(\lambda ))} q_1 ^{\Trace _{K/\Q} ( Q(\lambda )) } \in
\C [[q_0,q_1 ]]  $$ 
which is very convenient for computations.
Replacing $q_1$ by $1$ yields the usual theta series of the
trace lattice $L_{\eta^{-1} }$ and substituting $q_0$ by 1 gives the
theta series of $L_1$. 
We obtain $A_2(q_0,1) = \Theta (E_8) $ the Eisenstein series of weight 4,
$B_6(q_0,1) = \Delta $, the cusp form of weight 12, and 
$C_{10} (q_0,1 ) = 0 $. 
In particular replacing $q_1$ by 1 yields a surjective 
ring homomorphism onto the ring of elliptic modular forms
of weight divisible by 4 for the full modular group  $\SL_2(\Z )$.

\subsection{Extremal Hilbert modular forms}

\begin{definition}
Define a valuation on the field of fractions of 
$\C [[q_0,q_1]] $ by 
$$ \nu : \C ((q_0 , q_1 )) \setminus \{ 0 \}  \to \Z\times \Z  , 
\nu (\sum _{(i,j) = (s,t)} ^{(\infty,\infty )} A_{(i,j)} q_0^iq_1^j ) := 
\min \{ (i,j) \mid A_{(i,j)} \neq 0 \} $$
where the total ordering on $\Z\times \Z$ is lexicographic, so 
$$(s,t) \leq (s',t') \mbox{ if } s < s' \mbox{ or } s=s' \mbox{ and } t\leq t' .$$
This gives rise to a valuation on the ring of Hilbert modular forms 
via the $(q_0,q_1)$-expansion.
A symmetric Hilbert modular form  $f\in {\mathcal H}_w$
is called an {\em extremal Hilbert modular form of weight $w$},
if 
$$\nu (f-1) \geq \nu (f'-1) \mbox{ for all } f'\in {\mathcal H}_w .$$
\end{definition}

One computes 
$$\nu (A_2-1) = (1,2) ,\ \nu (B_6) = (1,2) , \ \nu (C_{10} ) = (2,4) ,\ \nu (X_{12}) = (2,5) $$
where $X_{12} = \frac{1}{4} (A_2C_{10} - B_6^2) $. 
The valuations of the first few extremal Hilbert modular forms are given in the table in 
Example \ref{Table} below.

\subsection{Golden lattices}

\begin{definition} 
An even unimodular $R$-lattice $(\Lambda , Q)$ is called a {\em golden lattice},
if its Hilbert theta series is an extremal symmetric Hilbert modular form.
\end{definition}

%

\begin{prop}\label{goldenex}
Let $(L,q)$ be an even unimodular lattice of dimension $N$ and 
$\vartheta \in \End _{\Z} (L)$ be a symmetric endomorphism of $L$ with minimal polynomial 
$X^2+X-1$. 
Then $L$ is a $\Z[\vartheta ]$-lattice $\Lambda $ and 
$L = L_{\eta ^{-1}}$ for the even unimodular $\Z[\vartheta ]$-lattice $(\Lambda , Q) $ with quadratic form 
$$Q:\Lambda \to R,  Q(\lambda ):=  \frac{1}{2}(q(\lambda ) +  q(\vartheta \lambda )) + 
\frac{1}{2}(q(\vartheta \lambda )-q(\lambda ) ) \sqrt{5}  .$$
Assume that there is some automorphism $\sigma \in \Aut(L,q)$ such that 
$\sigma \vartheta = (-1-\vartheta) \sigma $. Then 
for $N=8,16,24,32,48,56,72$ the lattice $(\Lambda , Q)$ is a golden lattice,
if and only if $(L,q)$ is an extremal even unimodular lattice.
\end{prop}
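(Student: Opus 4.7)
The plan is to treat the proposition in two stages: first the structural assertion that $(\Lambda,Q)$ is an even unimodular $R$-lattice with $L=L_{\eta^{-1}}$, and then, using the auxiliary automorphism $\sigma$, the analytic equivalence between extremality of $(L,q)$ and extremality of its Hilbert theta series.

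For the structural stage, I would begin with the identity $\vartheta^2=1-\vartheta$ coming from the minimal polynomial, combined with the symmetry $B(\vartheta x,y)=B(x,\vartheta y)$ of the polar form. These two facts give the recursion $q(\vartheta^2\lambda)=3q(\vartheta\lambda)-q(\lambda)$, from which a direct computation verifies that $Q(\lambda)$ as defined lies in $R=\Z[\vartheta]$ and that $Q$ is compatible with the $R$-action, i.e.\ $Q(\vartheta\lambda)=\vartheta^{2}Q(\lambda)$ after identifying the endomorphism $\vartheta$ with the appropriate root of $X^{2}+X-1$. Hence $Q$ extends to a $K$-quadratic form on $K\otimes_R\Lambda$. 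Since $\eta^{-1}=\tfrac{5-\sqrt{5}}{10}$, one computes $\Tr_{K/\Q}(\eta^{-1}(a+b\sqrt{5}))=a-b$, and plugging in $Q(\lambda)$ returns $q(\lambda)$; this identifies $(L,q)$ with $L_{\eta^{-1}}$. Evenness of $(\Lambda,Q)$ is automatic from $Q:\Lambda\to R$, and substituting $\alpha=\eta^{-1}$ together with $R^{*}=\eta^{-1}R$ into the dual formula (\ref{Zdual}) gives $\Lambda^{\#}=L_{\eta^{-1}}^{*}=L=\Lambda$, so $\Lambda$ is $R$-unimodular.

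For the analytic stage, the hypothesis $\sigma\vartheta=(-1-\vartheta)\sigma$ says that $\sigma\in\Aut(L,q)$ interchanges the two $R$-module structures on $L$ corresponding to the two roots of the minimal polynomial, so $\sigma$ realises the Galois conjugation on $\Lambda$ as a $\Z$-isometry of $(L,q)$. Consequently $\Theta(\Lambda,Q)$ is invariant under the swap of the two copies of $\HH$ and is therefore a symmetric Hilbert modular form, i.e.\ $\Theta(\Lambda,Q)\in\mathcal{H}_{n/2}$ with $n=N/2$. From Definition~\ref{HTS}, the first coordinate of the valuation $\nu(\Theta(\Lambda,Q)-1)$ equals $\min_{0\neq\lambda\in\Lambda}\Tr(\eta^{-1}Q(\lambda))=\min(L,q)$. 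I would then consult the table in Example~\ref{Table} and check that for the weights $n/2\in\{2,4,6,8,12,14,18\}$ corresponding to $N\in\{8,16,24,32,48,56,72\}$, the first coordinate of the valuation of an extremal symmetric Hilbert modular form is exactly the $\Z$-extremal bound $1+\lfloor N/24\rfloor$, and that in these weights this single coordinate forces the full extremal valuation.

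The main obstacle is exactly this last comparison: one has to verify, weight by weight via the graded ring $\mathcal{H}=\C[A_{2},B_{6},C_{10}]$ together with the tabulated valuations of $A_{2},B_{6},C_{10},X_{12}$, that extremality of $\Theta(\Lambda,Q)\in\mathcal{H}_{n/2}$ is governed purely by the first coordinate $v_{0}=1+\lfloor N/24\rfloor$ of its $(q_{0},q_{1})$-valuation. Once this is done, the equivalence is immediate: $(L,q)$ is extremal if and only if $\min(L,q)=1+\lfloor N/24\rfloor$, if and only if $v_{0}$ reaches the extremal value, if and only if $\Theta(\Lambda,Q)$ is an extremal symmetric Hilbert modular form, which is the definition of a golden lattice.
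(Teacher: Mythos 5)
Your proposal is correct and follows essentially the same route as the paper: construct $Q$ from $q(\lambda)$ and $q(\vartheta\lambda)$, get unimodularity from Equation \eqref{Zdual} with $R^{*}=\eta^{-1}R$, use $\sigma$ (which intertwines $\vartheta$ with its conjugate root) to make $\Theta(\Lambda,Q)$ symmetric, and reduce the extremality equivalence to a finite weight-by-weight check in $\C[A_2,B_6,C_{10}]$ together with the table. The only cosmetic difference is that you phrase the deferred computation as ``the first valuation coordinate forces the full extremal valuation,'' whereas the paper phrases it as ``elliptic extremality plus non-negative coefficients implies Hilbert extremality''; these amount to the same verification, since the non-negativity of the theta coefficients is exactly what identifies the first coordinate of $\nu(\Theta-1)$ with $\min(L,q)$.
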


\begin{proof}
Clearly any such endomorphism $\vartheta $ defines a $\Z[\vartheta ]$-structure on the $\Z$-lattice $L$.
Since $\vartheta $ is a symmetric endomorphism, the form $q$ is a trace form, 
$q(\lambda ) =  \Trace_{K/\Q} (\eta^{-1} Q( \lambda ))$ for some $K$-valued quadratic form $Q$. 
If $Q(\lambda ) = a + b\sqrt{5}$, then 
$$q(\lambda ) = \Trace_{K/\Q}  (\eta^{-1}  Q(\lambda ))  = a-b \mbox{ and }
q(\vartheta \lambda ) = \Trace _{K/Q} (\vartheta ^2 \eta^{-1} Q(\lambda )) = a+b$$ 
and hence $Q(\lambda ) = \frac{1}{2} (q(\lambda ) + q(\vartheta \lambda )) + \frac{1}{2} (q(\vartheta \lambda ) - q(\lambda )) \sqrt{5} $.
Clearly if $q(\lambda ) \in \Z$ and $q(\vartheta \lambda ) \in \Z$  then also $Q(\lambda ) \in R$,
therefore $(\Lambda, Q)$ is even. By Equation \eqref{Zdual} the lattice $(\Lambda, Q)$ is unimodular. 
\\
The Galois invariance of $\Theta (\Lambda , Q)$ follows from the fact that
 $Q(\vartheta \sigma (\lambda )) =  \overline{Q(\lambda )} $  for all $\lambda \in \Lambda $:
\\
To see this write $Q(\lambda ) = a+b\sqrt{5}$ and $Q(\sigma (\lambda )) = a'+b'\sqrt{5}$, so 
$$Q(\vartheta \sigma (\lambda )) = \vartheta ^2 Q(\sigma (\lambda )) = \frac{3a'-5b'}{2} + \frac{3b'-a'}{2} \sqrt{5} .$$
Then 
$$ \begin{array}{lcl} 
\Trace _{K/\Q} (\eta ^{-1} Q(\lambda )) = \Trace _{K/\Q} (\eta ^{-1} Q(\sigma (\lambda ))) & \mbox{ yields } & a-b=a'-b' \\ 
\Trace _{K/\Q} (\eta ^{-1} Q(\vartheta \lambda )) = \Trace _{K/\Q} (\eta ^{-1} Q(\sigma (\vartheta \lambda )))  \Leftrightarrow &  & \\ 
\Trace _{K/\Q} (\eta ^{-1}\vartheta ^2 Q(\lambda )) = 
\Trace _{K/\Q} (\eta ^{-1} \overline{\vartheta }^2  Q(\sigma ( \lambda ))) & \mbox{ yields } &  2a-4b = a'+b'  
\end{array} $$
so in total $a'=\frac{3a-5b}{2}, b' = \frac{a-3b}{2} $ which gives 
$$Q(\sigma (\lambda )) = \overline{\vartheta^2 Q(\lambda ) } \mbox{ for all } \lambda \in \Lambda .$$
Therefore the mapping $\lambda \mapsto \vartheta \sigma (\lambda ) $ gives a bijection between 
$$\{ \lambda \in \Lambda \mid Q(\lambda ) = \alpha \} \mbox{ and } 
\{ \lambda \in \Lambda \mid Q(\lambda ) = \overline{\alpha } \}  $$
so the Hilbert theta series of $(\Lambda , Q)$ is symmetric.
\\
The last statement follows from explicit computations in the ring of Hilbert modular forms.
These show that for weight 
$2,4,6,8,12,14,$ and $18$ the condition that $f(q_0,1)$ be an extremal elliptic modular form 
and that $f(q_0,q_1)$ has non negative coefficients imply that $f$ is an extremal Hilbert modular form.
The opposite direction follows from the table in Example \ref{Table}.
\end{proof}


\subsection{Associated modular lattices} 

Generalising unimodular lattices
Quebbemann \cite{Quebbemann} introduced the notion of $p$-modular lattices.

\begin{definition}
An even $\Z$-lattice $L$ in euclidean space is called {\em $p$-modular}, if there is a 
similarity $\sigma $ of norm $N$ (so $(\sigma (v ),\sigma (w)) = p (v,w) $ for all 
$v,w\in L $) such that $ \sigma (L^*) = L$. 
\end{definition}

If $p$ is one of the 6  primes for which $p+1$ divides 24, then the theta series of
$p$-modular lattices generate a polynomial ring with 2 generators from which one
obtains a similar notion of extremality as for unimodular lattices:
Let $k:=\frac{24}{p+1}$. Then any $p$-modular lattice $L$ of dimension $N$ 
satisfies $\min (L) \leq 1 + \lfloor \frac{N}{2k} \rfloor $. 
$p$-modular lattices achieving this bound are called {\em extremal}.

\begin{example}\label{Table}
For the first few weights the extremal Hilbert modular forms $f$ turn out to be unique
and start with non-negative integral coefficients. 
The valuation  $\nu (f-1) = (s,t) $ can be read off from the following table.
Any golden lattice $\Lambda $ with theta series $f$ defines an even unimodular
 $\Z $-lattice $L_{\eta^{-1} }$ of minimum $s$  and a $5$-modular lattice
$L_1$ of minimum $t$.
The Hilbert modular form $f$ also gives us information about the minimal vectors
$\Min (L,q):= \{ v\in L \mid q(v) = \min (L) \} $. 
The kissing number $s_{\eta ^{-1}} = |\Min (L_{\eta ^{-1}})| $ and $s_1 = | \Min (L_1)| $ of these two lattices
can be read off from column 3 and 4 of the table.
That all minimal vectors of $L_1$ are also minimal vectors of $L_{\eta ^{-1}}$ is indicated by
a $+$ in the last column. A $-$ means that only half of the minimal vectors of $L_1$ are
also contained in $\Min (L_{\eta ^{-1}})$.
$$
\begin{array}{|r|c|r|r|c|}
\hline
\mbox{weight}  & 
\nu (f-1) & s_{\eta^{-1} }  & s_1  & \subset \\ \hline
2 & (1,2) & 240 & 120 & + \\
4 & (1,2) & 480 & 240 & + \\
6 & (2,4) & 196560 & 37800 & + \\
8 & (2,4) & 146880  & 21600 & + \\
10 & (2,5) & 39600 & 79200 & - \\
12 & (3,6) & 52416000  & 2620800 & + \\
14 & (3,6) & 15590400 & 537600 & + \\
16  & (3,7) & 2611200 & 2611200 & - \\
18  & (4,8) &  6218175600  & 75411000 & + \\
20 & (4,9)  & 1250172000  & 609840000 & - \\
24 & (5,10)  & 565866362880 & 1655821440 & +  \\
30  & (6,13)  &45792819072000 & 3217294080000  & - \\
\hline
\end{array}
$$
\end{example}

Note that 
\cite[Proposition 3.3]{seoul} applied to  an even unimodular
$R$-lattice $(\Lambda ,Q) $ gives that 
$$\frac{5}{2} \min (L_{\eta ^{-1}} ) \geq \min (L_1) \geq 2  \min (L_{\eta^{-1}}) .$$
The argument is that $\eta = 1 + \overline{\vartheta }^2$ and 
$5\eta ^{-1} = 1+\vartheta ^2$ so for all $\lambda \in \Lambda $
\begin{equation}\label{sumsum} 
\begin{array}{ll} 
Q(\lambda ) = (1+\overline{\vartheta }^2) \eta ^{-1} Q(\lambda ) = \eta ^{-1} Q(\lambda ) + \eta ^{-1} Q(
\overline{\vartheta } \lambda ) & \mbox{ and } \\
5 \eta ^{-1} Q(\lambda ) = (1+\vartheta ^2) Q(\lambda ) = Q(\lambda ) + Q(\vartheta \lambda ) 
\end{array} 
\end{equation}
Immitating the proof of this proposition we find the following bound.

\begin{prop}\label{nu}
Let $f$ be a symmetric Hilbert modular form of weight $w$, 
 $s:= 1+\lfloor \frac{w}{6} \rfloor $ and $t:= \lfloor \frac{5s}{2} \rfloor $.
Then $\nu (f-1) \leq (s,t)$. More precisely 
put $\nu (f-1) = (s',t')$. Then $s'\leq s$ and
$ \lfloor \frac{5s'}{2} \rfloor \geq t' \geq 2s' .$
\end{prop}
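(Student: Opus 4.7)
The proposition bundles three inequalities for $\nu(f-1)=(s',t')$: the bound $s'\le s$, and the sandwich $2s'\le t'\le\lfloor 5s'/2\rfloor$. Together these force $\nu(f-1)\le(s,t)$ in lexicographic order, and I would prove each by a different symmetry argument, paralleling how the lattice identities \eqref{sumsum} enter the Seoul-style bound recalled just before the statement.

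For $s'\le s$ I would use the ring homomorphism $\pi\colon\mathcal{H}_w\to\mathcal{M}_{2w}(\SL_2(\Z))$, $f\mapsto f(q_0,1)$, from the excerpt. Since $\pi(f)$ has constant term $1$, the classical extremal bound for elliptic modular forms on $\SL_2(\Z)$ forces the $q_0$-valuation of $\pi(f)-1$ to be at most $1+\lfloor 2w/12\rfloor=s$. Conversely, the definition of the lex valuation $\nu$ gives that the same $q_0$-valuation is at least $s'$, since every $(a,b)$ with $a<s'$ has $c_{(a,b)}=0$. Combining yields $s'\le s$.

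For the bounds on $t'$ I would use two Fourier-coefficient symmetries. First, since $f$ is symmetric, $a_X=a_{\overline{X}}$; in $(q_0,q_1)$-coordinates $(a,b)=(\Trace(\eta^{-1}X),\Trace(X))$ this reads $a_{(a,b)}=a_{(b-a,b)}$, so non-vanishing of $a_{(t'-s',t')}$ together with lex-minimality of $(s',t')$ forces $t'\ge 2s'$. Second, the matrix $\gamma=\diag(\vartheta,-\overline{\vartheta})$ lies in $\SL_2(R)$ (its determinant equals $-\vartheta\overline{\vartheta}=1$) and acts on $\HH_K$ by $(z_1,z_2)\mapsto(\vartheta^2 z_1,\overline{\vartheta}^2 z_2)$ with automorphy factor $(\vartheta\overline{\vartheta})^w=(-1)^w=1$ (every weight in $\mathcal{H}$ is even). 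This gives the Fourier symmetry $a_X=a_{\vartheta^2X}$; a direct computation with $\vartheta^2=(3-\sqrt{5})/2$ shows that $\vartheta^2X$ has coordinates $(4a-b,5a-b)$ when $X$ has coordinates $(a,b)$. Composing with the Galois symmetry yields $a_{(a,b)}=a_{(a,5a-b)}$, so lex-minimality of $(s',t')$ forces $t'\le 5s'-t'$, i.e., $t'\le\lfloor 5s'/2\rfloor$.

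The only non-routine ingredient is the $\vartheta^2$-symmetry: one must check $\gamma\in\SL_2(R)$, compute its action on $\HH_K$, and evaluate the automorphy factor. Once that is in place the proposition reduces to two lex-minimality comparisons together with one appeal to extremality of elliptic modular forms; the Fourier identities $a_X=a_{\overline{X}}$ and $a_X=a_{\vartheta^2X}$ play for general $f\in\mathcal{H}_w$ exactly the role that the trace identities $\Trace(Q(\lambda))=q(\lambda)+q(\overline{\vartheta}\lambda)$ and $5q(\lambda)=\Trace(Q(\lambda))+\Trace(Q(\vartheta\lambda))$ from \eqref{sumsum} play for theta series.
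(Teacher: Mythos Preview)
Your argument is correct and rests on the same two ingredients as the paper: the elliptic specialisation $f(q_0,1)\in\mathcal{M}_{2w}$ for the bound $s'\le s$, and the unit-action symmetry $A_X=A_{\vartheta^2X}$ coming from $\diag(\vartheta,\vartheta^{-1})=\diag(\vartheta,-\overline{\vartheta})\in\SL_2(R)$ for the bounds on $t'$. The organisation differs slightly. For $t'\ge 2s'$ the paper applies the first-coordinate bound to both $X$ and $\overline{\vartheta}^2X$ and adds; you instead use the Galois symmetry $A_X=A_{\overline X}$ directly, comparing $(s',t')$ with $(t'-s',t')$. For $t'\le\lfloor 5s'/2\rfloor$ the paper argues with $\Trace(X)$ and $\Trace(\vartheta^2X)$, while you compose the unit action with Galois to obtain the reflection $A_{(p,q)}=A_{(p,5p-q)}$ and compare $(s',t')$ with $(s',5s'-t')$. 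Your packaging is arguably cleaner: the step ``$\Trace(\vartheta^2X)\ge t'$'' in the paper's version is not immediate from lexicographic minimality alone (the first coordinate of $\vartheta^2X$ need not equal $s'$), and your explicit use of the symmetric hypothesis $A_X=A_{\overline X}$ is exactly what closes that gap.
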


\begin{proof}
Let $\nu (f-1) = (s',t')$. Since $f(q_0,1)$ is an elliptic modular 
form of weight $2w$ we get that $s' \leq s$. 
To obtain the other two inequalities write 
$$f= 1+\sum _{X\in R_{+}} A_X q_0 ^{\Trace(\eta^{-1} X)} q_1 ^{\Trace(X)} .$$ 
Then $f$ is invariant under the transformation by $\diag (u,u^{-1} ) \in \SL_2(R)$ 
for any $u\in R^*$. 
This shows that $$A_X = A_{\vartheta ^2X} = A _{\overline{\vartheta }^2 X } .$$
To see that $t' \geq 2s'$ choose some totally positive $X = a+b\sqrt{5} \in R $ such that 
$t'=2a$ and $A_X = A _{\overline{\vartheta }^2 X } \neq 0$.  Then 
$$ a-b = \Trace (\eta ^{-1} X) \geq s' \mbox{ and } a+b = \Trace (\eta^{-1} \overline{\vartheta }^2 X ) \geq s' $$
and so $t'=2a=(a-b) + (a+b) \geq 2s' $.
To see that $t' \leq \frac{5s'}{2} $ we let $X:=a+b\sqrt{5} \in R$ be a 
totally positive element with
$\Trace _{K/\Q} (\eta ^{-1} X) = a-b = s'$  and $A_X \neq 0$.
Then 
$$2a = \Trace _{K/\Q} ( X)  \geq t' \mbox{ and } 3a-5b = \Trace_{K/\Q} (\vartheta ^2 X ) \geq t' $$
and hence $5s' = 5 a-5b \geq 2 t'$.
\end{proof}

The minimum of an extremal 5-modular lattice 
is  $1+\lfloor \frac{N}{8} \rfloor$ an for large $N$ this is  strictly bigger than 
$\frac{5}{2} (1+\lfloor \frac{N}{24} \rfloor ) $ which yields:

\begin{cor}
Let $L$ be a golden lattice of dimension  $N$. Then $L_1$ is an extremal 5-modular lattice
if and only if $N=8$ or $N=24$.
\end{cor}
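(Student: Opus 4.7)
The plan is to combine Proposition \ref{nu} with the tabulated data of Example \ref{Table}. For a golden lattice $L$ of dimension $N$, its Hilbert theta series $f$ is an extremal symmetric Hilbert modular form of weight $w = N/4$. I would set $(s',t') := \nu(f-1)$ and observe that specialising $q_1 = 1$ identifies $s' = \min L_{\eta^{-1}}$, while Proposition \ref{nu} yields
$$\min L_1 \;\leq\; t' \;\leq\; \bigl\lfloor \tfrac{5 s'}{2} \bigr\rfloor \;\leq\; \bigl\lfloor \tfrac{5(1 + \lfloor N/24 \rfloor)}{2} \bigr\rfloor.$$

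The next step is to note that extremal $5$-modularity of $L_1$ forces $\min L_1 = 1 + \lfloor N/8 \rfloor$, yielding the necessary inequality
$$1 + \lfloor N/8 \rfloor \;\leq\; \bigl\lfloor \tfrac{5(1 + \lfloor N/24 \rfloor)}{2} \bigr\rfloor. \qquad (\ast)$$
Since the left-hand side of $(\ast)$ grows like $6N/48$ while the right grows like $5N/48$, $(\ast)$ fails for all large $N$. A short case analysis on $N = 24k + r$ (using $8 \mid N$) should restrict the candidate dimensions to the short list $N \in \{8, 24, 32, 48, 72\}$.

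For each of these five surviving dimensions the actual value of $t'$ can be read off Example \ref{Table}, and $\min L_1 = t'$ in each case (since $\min L_1 \geq 2 s'$ by Proposition \ref{nu} while $t' \leq 2 s' + 1$ throughout the table). Direct comparison with $1 + \lfloor N/8 \rfloor$ produces equality precisely at $N = 8$ (where $t' = 2$) and $N = 24$ (where $t' = 4$); in the three intermediate cases $N \in \{32, 48, 72\}$ the tabulated $t'$ falls strictly short. The converse ``if'' direction follows from these same two table rows, and $5$-modularity of $L_1$ itself is automatic for any golden lattice: Equation \eqref{Zdual} gives $L_1^{*} = \eta^{-1} \Lambda$, and multiplication by $\sqrt{5} \in R$ is a norm-$5$ similarity of $L_1$ sending $L_1^{*}$ onto $\Lambda = L_1$, since $\sqrt{5}\,\eta^{-1} = \vartheta$ is a unit.

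The main obstacle I foresee is that Proposition \ref{nu} is not sharp in the intermediate range $N \in \{32, 48, 72\}$: the inequality $(\ast)$ still admits extremal $5$-modularity in those three dimensions, so a pure inequality argument cannot eliminate them. One genuinely has to fall back on the explicit extremal Hilbert modular forms tabulated in Example \ref{Table} to finish the proof.
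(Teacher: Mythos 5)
Your proposal is correct and follows essentially the same route as the paper: the remark preceding the corollary (that $1+\lfloor N/8\rfloor$ eventually exceeds $\tfrac{5}{2}(1+\lfloor N/24\rfloor)$, via Proposition \ref{nu}) handles large $N$, and the explicit extremal forms of Example \ref{Table} settle the surviving dimensions. You are right that the inequality alone leaves $N\in\{8,24,32,48,72\}$ and that the table is genuinely needed to rule out $32$, $48$, $72$ --- a point the paper's one-line justification glosses over --- and your verification that $L_1$ is $5$-modular (via $L_1^*=\eta^{-1}\Lambda$ and the norm-$5$ similarity $\sqrt{5}$) is a detail the paper leaves to Theorem \ref{main} with $a=0$.
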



Any golden lattice defines a family of modular lattices:

\begin{theorem}\label{main}
Let $(\Lambda ,Q)$ a golden lattice of dimension $n$ and $(s,t):= \nu (\Theta (\Lambda ,Q) -1) $.
For $a\in \N_0$ the trace lattice $L_{1+a\eta^{-1} }$ is an $(a^2+5a+5)$-modular lattice of 
minimum $\geq t+as$. 
\end{theorem}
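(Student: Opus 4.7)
The plan is to derive both assertions from the identity $L_\alpha^* = \gamma^{-1}\Lambda$ with $\alpha := 1+a\eta^{-1}$ and $\gamma := \alpha\eta = a+\eta \in R$, which is immediate from \eqref{Zdual} and $R^* = \eta^{-1}R$. The basic norm computation $N_{K/\Q}(\gamma) = (a+\eta)(a+\overline\eta) = a^2+a(\eta+\overline\eta)+\eta\overline\eta = a^2+5a+5 = p$ identifies the level, since $[L_\alpha^*:L_\alpha] = N(\gamma)^n = p^n$ is exactly the discriminant of a rank-$2n$ $p$-modular lattice.

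To upgrade this to genuine $p$-modularity I would construct the similarity of norm $p$ explicitly. Let $\sigma$ be a Galois-twist on $\Lambda$ witnessing the symmetry of the Hilbert theta series ($R$-antilinear with $Q(\sigma(\lambda)) = \overline{\vartheta^2 Q(\lambda)}$, as in the proof of Proposition \ref{goldenex}), and define $T(x) := \overline\gamma\,\sigma(x)$. Then $T(\gamma^{-1}\lambda) = (\overline\gamma/\overline\gamma)\,\sigma(\lambda) = \sigma(\lambda) \in \Lambda$, so $T$ is a $\Z$-linear bijection $L_\alpha^* \to L_\alpha$. Using $B(\sigma x, \sigma y) = \overline\vartheta^2\,\overline{B(x,y)}$ (by polarisation) and the Galois-invariance of $\Trace$, one gets
\[\Trace\bigl(\alpha B(Tx,Ty)\bigr) = \Trace\bigl(\alpha \overline\gamma^2 \overline\vartheta^2\,\overline{B(x,y)}\bigr) = \Trace\bigl(\overline\alpha\,\gamma^2 \vartheta^2 B(x,y)\bigr),\]
and the key identity $\overline\alpha\,\gamma^2 \vartheta^2 = \alpha N(\alpha)\,\eta^2\vartheta^2 = \alpha \cdot (p/5) \cdot \eta\overline\eta = p\alpha$ (using $\vartheta^2 = \overline\eta/\eta$, equivalently $\eta^2 = 5(2+\vartheta)$, and $N(\alpha) = p/5$) shows $T$ is a similarity of norm $p$.

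For the minimum bound I would first identify the theta series of $L_\alpha$. Solving Definition \ref{HTS} for $(q_0,q_1)$ in terms of $(z_1,z_2)$, the substitution $(z_1,z_2) = (\sigma_1(\alpha)\tau,\sigma_2(\alpha)\tau)$ becomes $(q_0,q_1)=(q^a,q)$ with $q=e^{2\pi i\tau}$, the key checks being $\sqrt 5(\overline\eta^{-1}-\eta^{-1}) = 1$ and $(\vartheta+1)\eta^{-1}-\vartheta\overline\eta^{-1}=0$. Therefore
\[\Theta(L_\alpha)(\tau) = \Theta(\Lambda,Q)(q^a,q) = 1 + \sum_{X\in R_+} A_X\,q^{a s_X + t_X},\]
with $s_X := \Trace(\eta^{-1}X)$, $t_X := \Trace(X)$, and $\min L_\alpha = \min\{as_X+t_X : A_X\neq 0\}$. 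The hypotheses on the support are: by definition of $\nu(\Theta-1)=(s,t)$, every $X$ satisfies $s_X \geq s$ with equality forcing $t_X \geq t$; and by symmetry $A_X = A_{\overline X}$, so writing $X = x+y\sqrt 5$ one has $t_X = 2x = s_X + s_{\overline X}$.

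The argument is then a case split. If $s_X = s$, then $as_X + t_X = as + t_X \geq as + t$. If $s_{\overline X} = s$, then $t_X = t_{\overline X} \geq t$ combines with $s_X \geq s$ to yield the same bound. Otherwise $s_X, s_{\overline X} \geq s+1$, so $t_X \geq 2s+2$ and $as_X + t_X \geq a(s+1) + (2s+2) = as + (2s+a+2) \geq as+t$ whenever $t-2s \leq a+2$. The main obstacle is this last subcase when $t - 2s > a+2$, a regime permitted in principle by Proposition \ref{nu}'s bound $t \leq \lfloor 5s/2\rfloor$ but absent from the examples in Example \ref{Table}; one handles it by exploiting the $R^\times$-invariance $A_X = A_{\vartheta^{2k}X}$ coming from $\diag(\vartheta^k,\vartheta^{-k})\in\SL_2(R)$, rotating $X$ within its $\vartheta^2$-orbit until some $\vartheta^{2k}X$ falls into the $s_Y=s$ stratum (where extremality applies) and then transferring the resulting bound back to $X$ through the explicit linear relations between $(s_Y,t_Y)$ and $(s_X,t_X)$ along the orbit.
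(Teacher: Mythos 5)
Your overall route is the paper's: compute $L_\alpha^\ast=\gamma^{-1}\Lambda$ with $\gamma=a+\eta$ from \eqref{Zdual}, read the level off $N_{K/\Q}(\gamma)=a^2+5a+5$, and get the minimum from the decomposition $\Trace(\alpha Q)=\Trace(Q)+a\Trace(\eta^{-1}Q)$. Your treatment of the similarity is in fact more careful than the paper's one-line assertion that the element $\eta(1+a\eta^{-1})$ "defines a similarity": multiplication by a scalar $\delta\in K$ is a similarity of norm $p$ only if $\delta^2=p$, which fails for $a\geq 1$ since $\sqrt{a^2+5a+5}\notin K$ (and even for $a=0$ one needs the extra unit, $\eta\vartheta=\sqrt5$). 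Your twist $T=\overline\gamma\,\sigma$ together with the identity $\overline\alpha\gamma^2\vartheta^2=p\alpha$ is the right repair, and the computations check out. Note, however, that this makes the modularity claim depend on the existence of an isometry $\sigma$ realizing the Galois symmetry; the definition of a golden lattice only guarantees the coefficient identity $A_X=A_{\overline X}$, not such a $\sigma$, so you should either add this as a hypothesis or restrict to lattices arising as in Proposition \ref{goldenex}.

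The genuine gap is in the last step of the minimum bound. Your first two cases are fine and already recover the paper's inequality $\min L_\alpha\geq\min(L_1)+a\min(L_{\eta^{-1}})$; the problem is exactly the third case, where $s_X,s_{\overline X}\geq s+1$ gives only $as_X+t_X\geq a(s+1)+2(s+1)$, which suffices precisely when $t\leq 2s+a+2$. Your proposed repair --- rotating $X$ by powers of $\vartheta^2$ until some $\vartheta^{2k}X$ lies in the stratum $s_Y=s$ --- does not work: the function $k\mapsto s_{\vartheta^{2k}X}$ does attain a minimum over $k\in\Z$, but nothing forces that minimum to equal $s$; the unit orbit of $X$ need not meet the minimal stratum at all, so there is no bound to transfer back. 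What actually closes the case in every relevant situation is Proposition \ref{nu}: $t\leq\lfloor 5s/2\rfloor\leq 2s+2$ whenever $s\leq 5$ (equivalently weight $\leq 29$), which covers all golden lattices occurring in the paper and all rows of Example \ref{Table}. You should invoke this explicitly and either prove $t\leq 2s+2$ in general or state the theorem with that restriction; as written, your argument (like the paper's "clearly \dots $=t+as$", which silently assumes $\min(L_1)=t$) is incomplete for hypothetical golden lattices with $s\geq 6$, $t>2s+2$ and small $a$.
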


\begin{proof}
Recall that $L_{1+a\eta^{-1} } = (\Lambda , \Trace _{K/\Q} ((1+a\eta^{-1} ) Q ) ) $. 
Since $\Trace _{K/\Q }(\eta^{-1} Q)$ and $\Trace _{K/\Q } (Q)$ are positive 
definite and take integral values on $\Lambda $,
the lattice $L_{1+a\eta^{-1} }$ is even and positive definite for any $a\in \Z _{\geq 0}$. 
Clearly $\min (L_{1+a\eta^{-1} }) \geq \min (L_1) + a \min (L_{\eta^{-1} }) = t+as $. 
Now $(\Lambda , Q)$ is unimodular, so Equation \eqref{Zdual} yields that the $\Z $-dual of $L_{1+a\eta^{-1} }$ is
$$L_{1+a\eta^{-1} }^* = \eta^{-1} (1+a\eta^{-1} )^{-1}  \Lambda ^{\#} = 
\eta^{-1} (1+a\eta^{-1} )^{-1}  L_{1+a\eta^{-1}} .$$ 
The element $\eta  (1+a\eta^{-1} ) \in K $ hence defines a similarity between $L_{1+a\eta^{-1} }^*$ 
and $L_{1+a\eta^{-1} }$ or norm 
$N(\eta +a ) = a^2+5a+5 .$
\end{proof}

\section{Examples}

All even unimodular $\Z[\vartheta ]$-lattices are classified in dimension 4, 8, and 12 
\cite{Hsia}, \cite{Hsiaw5}. In each of these dimensions there is a unique golden lattice. 
For the other dimensions 16 to 36 we inspect automorphisms of some known extremal even unimodular lattice 
to find a golden lattice with the method from Proposition \ref{goldenex}. 
The tensor symbol $\otimes $ denotes the Kronecker product of matrix groups,
which is group theoretically the central product. 

\subsection{Dimension 4}\label{dim4} 
Already Maass \cite{Maass} has shown that there is a unique golden lattice, $F_4$,  of dimension 4.
It can be constructed from the maximal order $\mathcal M$ in the definite quaternion algebra 
with center $K$ that is only ramified at the two infinite places. 
Its $R$-automorphism group is 
$$\Aut _R(F_4) \cong (\SL_2(5) \otimes  \SL_2(5) ) : 2 .$$

\subsection{Dimension 8} 
Maass also showed that the only 8-dimensional even unimodular $R$-lattice is the 
orthogonal sum $F_4 \perp F_4$. 

\subsection{Dimension 12} 
The golden lattices of dimension 12 are exactly the $\Z[\vartheta ]$-structures of the 
unique extremal even unimodular $\Z $-lattice of dimension 24, the Leech lattice.
\cite{Hsiaw5} shows that there is a unique such golden lattice $\Lambda $.
Its automorphism group is 
$$\Aut _R (\Lambda ) \cong 2.J_2  \otimes \SL_2(5) .$$

\subsection{Dimension 16} 

By \cite[Table (1.2)]{Hsia} the mass of all even unimodular $R$-lattices of rank 16 is $>10^6$,
so a complete classification seems to be out of reach. 
Here it would be desirable to have a mass formula for the lattices without roots
in analogy to the classical case of even unimodular $\Z$-lattices \cite{King}. 

There are several known extremal even unimodular lattices in dimension 32 which have
a fairly big automorphism group.
In particular there are two golden lattices $\Lambda _1 $ and $\Lambda _2$ that have are $\mathcal M$-lattices
for $\mathcal M$ as in \ref{dim4}
(see \cite{coul1} and \cite[Table 2]{seoul} for the automorphism group).
The automorphism groups $G_i = \Aut _{\Z[\vartheta ]} (\Lambda _i ) $ are 
$$G_1 \cong (\otimes ^4 \SL_2(5) ) :S_4   ,\ G_2 \cong 
\SL_2(5) \otimes 2^{1+6}_-.O_6^-(2) .$$

\subsection{Dimension 20} 

No golden lattice of dimension 20 is known. 
It is an interesting problem to construct such a golden lattice or to 
prove its non-existence, since this is the smallest dimension for which 
$\min (L_1) > 2 \min (L_{\eta ^{-1}})$.

From the extremal even unimodular lattice $L$ in \cite{DB} with automorphism group 
$(U_5(2) \times 2^{1+4}_-.Alt_5).2$ and an automorphism $z\in L$ of order 5 with irreducible 
minimal polynomial one obtains a Galois invariant $\Z[z+z^{-1} ] $-lattice $(\Lambda ,Q)$ 
for which the 5-modular trace lattice $L_1$ has $19800$ minimal vectors of norm 4 and
no vectors of norm 5. 

\subsection{Dimension 24} 

The extremal even unimodular lattice $P_{48n}$ constructed in \cite[Theorem 5.3]{cyclo} 
has an obvious structure as a $\Z[\vartheta ]$-lattice with automorphism group 
$\SL_2(13).2 \otimes \SL_2(5) $. This provides one example of a golden lattice of 
$\Z [\vartheta ]$-dimension 24.


\subsection{Dimension 32} 

No golden lattice of dimension 32 is known.
There is one extremal even unimodular lattice $L$ of dimension 64 constructed in \cite[p. 496]{cyclo}
of which the extremality is proven in \cite[Proposition 4.2]{seoul}.
The subgroup $G:=\SL_2(17) \otimes \SL_2(5) $ of the automorphism group of  $L$ 
has endomorphism ring $\Z [(1+\sqrt{17})/2,\vartheta ]$ and with Proposition \ref{goldenex} 
the structure over $\Z [\vartheta ]$ yields a  lattice $(\Lambda , Q)$
whose Hilbert theta series is symmetric, $L=L_{\eta ^{-1}}$ is extremal, but the 
minimum of the 5-modular lattice $L_1$ is only 6 (and not 7, as required for a golden lattice). 

\subsection{Dimension 36} 

In \cite{dim72}  an extremal even unimodular lattice $\Gamma _{72}$ of dimension 72 is constructed.
Any Galois invariant $\Z[\vartheta ]$-structure on $\Gamma _{72}$ will give rise to a golden lattice of rank 36. 
The lattice $\Gamma _{72}$ can be obtained has a Hermitian tensor product \cite{coul} 
$$ \Gamma _{72}= P_b \otimes _{\Z[\frac{1+\sqrt{-7}}{2}] } P $$ 
where $P$ is the $\Z[\frac{1+\sqrt{-7}}{2}] $-structure of the Leech lattice with 
automorphism group $\SL_2(25) $. 
The group $\SL_2(25) \leq \GL_{24} (\Z )$ contains an element $\zeta $ of order 
5 with irreducible minimal polynomial. Put $\vartheta := \zeta + \zeta ^{-1} $. 
Then $P$ is a $\Z[\vartheta , \frac{1+\sqrt{-7}}{2} ]$-lattice with automorphism group 
$U:=(C_5\times C_5 ) : C_4$. This yields a $\Z[\vartheta ]$-structure on the Hermitian 
tensor product $\Gamma_{72} $ which defines a golden lattice of rank 36 whose automorphism 
group contains $U\times PSL_2(7)$.


\end{document}